\documentclass[12pt]{article}
\usepackage[utf8]{inputenc}
\usepackage[english]{babel}
\usepackage{amsthm}
\usepackage{geometry}
\usepackage{amstext}
\usepackage{amssymb}
\usepackage{amsmath}
\usepackage{setspace}
\usepackage{cmap}
\usepackage{authblk}
\usepackage{hyperref}
\newtheorem{thm}{Theorem}
\newtheorem{lem}{Lemma}
\newtheorem{cor}{Corollary}
\theoremstyle{definition}
\newtheorem{rmk}{Remark}
\begin{document}
\newcommand{\Var}{\operatorname{Var}}
\author[1]{Y.~Kirpicheva}
\author[1]{A.~Shklyaev \thanks{alexander.shklyaev@math.msu.ru}} %
\title{Branching Process in a Varying Environment: How to Grow Like the Product of Means} 
\affil[1]{Lomonosov Moscow State University, Moscow, Russia}
\date{}
\maketitle
\begin{abstract}
Consider a branching process $\{Z_n\}$ in a varying environment. Let $\{W_n\}$ be the natural martingale $Z_n/{\bf E}Z_n$. It converges to some random variable $W$ as $n\to\infty$. An important problem is to show that ${\bf P}(W>0)$ equals the survival probability, so that $Z_n$ is either $0$ or of the order ${\bf E}Z_n$. We find a new kind of sufficient conditions, applicable to branching processes in a random environment. An important property of our estimates is that we don't necessary assume that ${\bf E}X_{i,1}^2$ are finite for every $i$.    
\end{abstract}
\section{Introduction}
Let $P_i=\{p_{i,j}, j\ge 0\}$ be a sequence of distributions on $\mathbb{N}_0=\mathbb{N}\cup \{0\}$, and let $(X_{i,j}, i>0)$ be independent sequences of independent random variables, $X_{i,j}\sim P_i$, $i\ge 1$. Define $\{Z_n\}$ by $Z_0=1$, 
$$
Z_{n+1} = \sum_{i=1}^{Z_n} X_{n+1,i}, n\ge 0.
$$
The process $\{Z_n\}$ is called a branching process in a varying environment (BPVE). 

Let $\mathcal{P}$ be the space of all probability distributions on $\mathbb{N}_0$:
$$\mathcal{P}=\left\{p_{i}: p_i\ge 0, \sum_{i=0}^{\infty} p_i\right\}.$$ 
Assume that ${\bf Q}$ is some distribution on the space $\mathcal{P}^{\infty}$. Let $\mathrm{R}=(R_i, i\ge 1)$ be a sequence with distribution ${\bf Q}$.
A process $\{Z_n\}$ is called a branching process in a random environment (BPRE) if for every fixed $\mathrm{R}=(r_1,r_2,\dotsc)$ the sequence $\{Z_n\}$ is a BPVE in the environment $(r_1,r_2,\dotsc)$.

The BPRE model has been studied in detail, see \cite{VatKerst} for a variety of results. The case of BPVE is has not been studied enough. The pioneering works of~\cite{Jagers} and~\cite{Agresti} give some basic results. 

We are interested in the behaviour of the natural martingale $W_n = Z_{n}/{\bf E}Z_n$, $n\ge 1$. In the important recent work~\cite{Kersting} G. Kersting gives a general criterion for $\{W_n\}$ to converge. However, Kersting's work have two important disadvantages. The key problem is condition (A):
\begin{equation}
\label{KestingA}
\sup_n \frac{{\bf E}(X_{n,1}^2; X_{n,1}\ge 2)}{{\bf E}(X_{n,1}|X_{n,1}\ge 1) {\bf E}(X_{n,1}; X_{n,1}\ge 2)}<+\infty.
\end{equation}
This condition is quite restrictive for BPRE. The second problem is that ${\bf E}X_n^2$ must be finite for every $n$. 

So, the powerful techniques for BPVE, described in~\cite{Kersting}, are too restrictive for BPRE. On the other hand, standard techniques for BPRE usually assume some stationarity of the environment. We want to find a general approach for BPVE, that works for BPRE too, especially in the case of a non-stationary environment. For example, for BPRE with cooling (see~\cite{Korshunov}) we have a random, but non-stationary environment. 

We are working with the following question -- does the martingale $\{W_n\}$ converge a.s and in $L^1$ to some r.v. $W$ and is it true that 
$$
{\bf P}(W>0) = {\bf P}(Z_n>0, n>0)?
$$
The key question is the last one. Assume that ${\bf E}X_{i,1}^2$ are finite for every $i$. Then we assume that
\begin{eqnarray*}
{\bf P}(Z_n\to \infty) + {\bf P}(\exists n: Z_n=0) = 1,\\
\liminf_{l\to \infty} \sum_{i=0}^{\infty} 
\frac{\Var X_{i+l,1}}{\left({\bf E}X_{i+l,1}\right)^2} \prod_{j=l}^{l+i-1} \frac{1}{{\bf E}X_{j,1}} < +\infty.
\end{eqnarray*}
The first condition is standard (see~\cite{Jagers}) and weak. The second is much less restrictive than~(\ref{KestingA}). The corresponding Theorem~\ref{BPVETh} is stated and proved in Subsection~\ref{VarianceSec}.

If we only assume that ${\bf E}X_{i,1} \phi(X_{i,1})$ is finite, where $\phi(x)=o(x)$, $x\to\infty$, then we give another assumption~ (\ref{Ass2ForPsi}) instead of (\ref{Ass2}). The corresponding results are stated in Subsection~\ref{GeneralSec}.

In Section~\ref{Applications} we give some applications for BPRE. We show that our conditions for the case of supercritical BPRE are only slightly weaker than the best possible conditions of~\cite{Tanny}. For a critical BPRE under the condition ${\bf E}_{\boldsymbol\eta} X_{1,1}^2<+\infty$ our general result gives the same sufficient conditions as in the fundamental work~\cite{AGKV}. However, we obtain new results for a critical BPRE in the case ${\bf E}_{\boldsymbol \eta} X_{1,1}^2=+\infty$. 
\section{Main Results}
\subsection{Variance Condition on BPVE}
\label{VarianceSec}
\begin{thm}
\label{BPVETh}
    Let $\{Z_n\}$ be a BPVE, ${\bf E}X_{i,1}$ be positive and $\Var X_{i,1}$ be finite for every $i$. Let 
    $\xi_i = \log {\bf E}X_{i,1}$, $\zeta_i = \Var X_{i,1} e^{-2\xi_i}$, $i>0$. Let $S_n=\xi_1+\dotsb+\xi_n$. Assume that 
    \begin{eqnarray}  
    \label{Ass1} {\bf P}\left(Z_n \rightarrow \infty \right) + {\bf P}\left(\exists n:  Z_n = 0 \right) = 1,\\
    \label{Ass2} \liminf_{l \to \infty} a_l<+\infty,\ a_l:=\sum_{i = 0}^{\infty} \zeta_{i+l} e^{-(S_{i+l} - S_l)}. 
    \end{eqnarray}
    Then the martingale $W_n=Z_n/\exp(S_n)$, $n\ge 1$, converges in $L^2$ to some random variable $W$ and 
    \begin{equation}
    \label{WandZn}
    {\bf P}(W>0)={\bf P}(Z_n>0,\ n>0)>0.
    \end{equation}
\end{thm}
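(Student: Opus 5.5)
The plan is to use a second-moment computation to get $L^2$ convergence. Then I would decompose $W$ along generation $l$ and apply a Paley--Zygmund bound along a subsequence $l_k$ on which $a_{l_k}$ stays bounded, so that the probability of $\{W=0\}$ is squeezed onto the extinction event via (\ref{Ass1}).

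\emph{Step 1 (second moment and $L^2$ convergence).} Let $\mathcal F_n=\sigma(Z_0,\dots,Z_n)$. Conditionally on $\mathcal F_n$, $Z_{n+1}$ is a sum of $Z_n$ i.i.d.\ copies of $X_{n+1,1}$. Hence
$$
{\bf E}\bigl[(W_{n+1}-W_n)^2\mid \mathcal F_n\bigr]=Z_n\Var X_{n+1,1}\,e^{-2S_{n+1}}=W_n\,\zeta_{n+1}e^{-S_n}e^{\xi_{n+1}} .
$$
Taking expectations and summing the orthogonal increments gives ${\bf E}W_n^2=1+\sum_{k<n}\zeta_{k+1}e^{-S_{k+1}}e^{\xi_{k+1}}$, up to the exact indexing convention. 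This expression is controlled by a series of the type $a_0$. Since $\zeta_{i+l}e^{-S_{i+l}}=e^{-S_l}\,\zeta_{i+l}e^{-(S_{i+l}-S_l)}$ and all the $\zeta_i$ are finite, finiteness of $a_l$ for a single $l$ (guaranteed by (\ref{Ass2})) implies finiteness of the full series. So $\sup_n{\bf E}W_n^2<\infty$, and the martingale convergence theorem yields $W_n\to W$ a.s.\ and in $L^2$, with ${\bf E}W=1$. In particular ${\bf P}(W>0)>0$.

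\emph{Step 2 (branching decomposition).} For fixed $l$, let $W^{(l,j)}$, $j\ge1$, be the limits of the natural martingales of the $Z_l$ subtrees rooted at the individuals of generation $l$, each run in the shifted environment $(P_{l+1},P_{l+2},\dots)$. These are i.i.d., independent of $\mathcal F_l$, and $W=e^{-S_l}\sum_{j=1}^{Z_l}W^{(l,j)}$. Hence ${\bf P}(W=0\mid\mathcal F_l)=q_l^{Z_l}$ with $q_l={\bf P}(W^{(l,1)}=0)$. Applying Step 1 to the shifted process gives ${\bf E}(W^{(l,1)})^2=1+b_l$, where $b_l$ is the shifted series. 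Paley--Zygmund then gives ${\bf P}(W^{(l,1)}>0)\ge ({\bf E}W^{(l,1)})^2/{\bf E}(W^{(l,1)})^2=1/(1+b_l)$.

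\emph{Step 3 (conclusion).} Choose $l_k\to\infty$ with $a_{l_k}\le C$. Assuming $b_{l_k}\le C'$, we get $q_{l_k}\le\delta:=1-1/(1+C')<1$, and therefore
$$
{\bf P}(W=0)={\bf E}\,q_{l_k}^{Z_{l_k}}\le {\bf E}\,\delta^{Z_{l_k}}\longrightarrow {\bf P}(\exists n: Z_n=0),
$$
by dominated convergence and (\ref{Ass1}): a.s.\ either $Z_{l}\to\infty$, so $\delta^{Z_l}\to0$, or $Z_l$ is eventually $0$, so $\delta^{Z_l}\to1$. The reverse inclusion $\{\exists n:Z_n=0\}\subset\{W=0\}$ is trivial. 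Together these give (\ref{WandZn}).

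\emph{Main obstacle.} The delicate point is matching the shifted second moment $b_l$ with the quantity $a_l$ in (\ref{Ass2}). The naive computation produces terms $\zeta_{l+i}e^{-(S_{l+i-1}-S_l)}$ rather than $\zeta_{l+i}e^{-(S_{l+i}-S_l)}$, i.e.\ an extra factor $e^{\xi_{l+i}}$. I would first recheck the conditional variance formula carefully, since the normalisation of $W$ by $e^{S_{n+1}}$ versus $e^{S_n}$ decides which factor appears. If the mismatch persists, I would replace Paley--Zygmund applied to $W^{(l,1)}$ by Paley--Zygmund applied to the generation-$(l+1)$ normalised population $e^{-(S_{l+1}-S_l)}Z^{(l)}_1$ combined with its descendants. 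Equivalently, I would condition one generation later, which shifts the index and absorbs the factor $e^{\xi_{l+1}}$ into the first step. The aim is a bound of the form $q_l\le 1-c/(1+a_l)$ with $c>0$ uniform along the subsequence.
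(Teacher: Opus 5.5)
Your proposal has a genuine gap, and it is the step you yourself flag as the main obstacle. Your proposed repair does not close it. In fact, with $a_l$ as literally defined, nothing can close it, because the statement is then false.

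\textbf{The mismatch.} In Step 1 the conditional variance is $Z_n\Var X_{n+1,1}e^{-2S_{n+1}}=W_n\zeta_{n+1}e^{-S_n}$; the extra $e^{\xi_{n+1}}$ is a slip. For the process started by one individual at generation $l$ this gives ${\bf E}(W^{(l,1)})^2=1+b_l$, where
$$b_l=\sum_{j\ge1}\zeta_{l+j}e^{-(S_{l+j-1}-S_l)}.$$
By contrast, $a_l=\sum_{j\ge0}\zeta_{l+j}e^{-(S_{l+j}-S_l)}$. For every $j\ge1$, the $j$-th term of $b_l$ is the corresponding term of $a_l$ multiplied by $e^{\xi_{l+j}}$, not just for $j=1$.

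So your Step 1 claim, that finiteness of one $a_l$ gives finiteness of the second-moment series, is already wrong. Conditioning one generation later only shifts indices; it does not remove these factors. When the means $e^{\xi_n}$ are unbounded, nothing compensates for them.

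\textbf{Counterexample.} Take offspring uniform on $\{1,3\}$ at odd generations, so $\xi=\log 2$ and $\zeta=1/4$. At generation $2m$ take offspring $1$ with probability $1-1/K_m$ and $K_m^2$ with probability $1/K_m$, with integers $K_m\ge 2^me^{2S_{2m-1}}$ chosen recursively.
\begin{itemize}
\item Then $e^{\xi_{2m}}\ge K_m$ and $K_m/8\le\zeta_{2m}\le K_m$, so $\zeta_{2m}e^{-\xi_{2m}}\le 1$ and $a_{2k+1}\le 5/2$ for all $k$.
\item Condition (\ref{Ass1}) holds, since nobody dies and $Z_n\to\infty$.
\item However, ${\bf P}(Z_{2m}\ne Z_{2m-1})\le e^{S_{2m-1}}/K_m\le 2^{-m}$. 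So almost surely, eventually $W_{2m}/W_{2m-1}=e^{-\xi_{2m}}\le 1/K_m\to0$. This forces $W=0$ a.s., although survival has probability one.
\item Moreover ${\bf E}(W_{2m}-W_{2m-1})^2=\zeta_{2m}e^{-S_{2m-1}}\to\infty$, so there is no $L^2$ convergence either.
\end{itemize}

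\textbf{The paper has the same gap.} Both (\ref{L2Ineq}) and the bound $4a_mk/k^2$ use that the expected sum of squared increments of the martingale restarted at time $m$ with $k$ particles is at most $ka_m$. But (\ref{L2Est}) shows it equals $kb_m$.

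\textbf{Under a corrected hypothesis, your route works.} Suppose the hypothesis is replaced by $\liminf_l b_l<\infty$; this is exactly the condition displayed in the Introduction, whose $l$-th quantity is $b_{l-1}$. Alternatively, add $\sup_n\xi_n<\infty$, in which case $b_l\le e^{\sup_n\xi_n}a_l$. Either way, your argument is correct and genuinely different from the paper's.

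The paper runs a renewal scheme with stopping times $\tau_l$ (where $a_j<M$ and $Z_j\ge K$) and $\nu_l$ (where the renormalised population has halved). It bounds each halving probability by $1/2$ via a second-moment estimate and deduces ${\bf P}(\nu_n<\infty)<2^{-n}$.

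Your route uses the exact identity ${\bf P}(W=0\mid\mathcal F_l)=q_l^{Z_l}$, together with Paley--Zygmund and dominated convergence along $l_k$. It needs no stopping times, no maximal inequality and no threshold $K$. It also yields the explicit bound ${\bf P}(W=0)\le{\bf E}\bigl(b_l/(1+b_l)\bigr)^{Z_l}$ for every $l$.
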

\begin{rmk}
\label{JagersTh}
Theorem 1 of~\cite{Jagers} says that~(\ref{Ass1}) is equivalent to 
$$
\sum_{i=1}^{\infty} \left(1-{\bf P}(X_{i,1}=1)\right)=\infty
$$
if ${\bf P}(X_{i,1}=0)<1$ for every $i$.
\end{rmk}
\begin{proof}[Proof of Theorem~\ref{BPVETh}]
Note that~(\ref{Ass2}) implies that $a_i<+\infty$ for every $i$, since 
$$
a_{l} = a_{l+1} e^{-\xi_l} + \zeta_{l}.
$$
The convergence of $\{W_n\}$ in $L^2({\bf P})$ is well-known (see Theorem 2 of Kersting). Hovewer, we'll prove it for the reader's convenience: for any natural $m$, $k$ we have
\begin{eqnarray}
\label{L2Est}
{\bf E}\left(\left.\left|\frac{Z_m}{e^{S_m}} - \frac{Z_{m-1}}{e^{S_{m-1}}} \right|^2\right|Z_0=k\right) = e^{-2S_m} {\bf E}(\operatorname{Var}(Z_m|Z_{m-1})|Z_0=k) = \\
e^{-2S_{m-1}} \zeta_m {\bf E}\left(Z_{m-1}|Z_0=k\right) = k \zeta_m e^{-S_{m-1}}.\nonumber
\end{eqnarray}
The standard martingale trick gives us
\begin{eqnarray}
\label{MartingaleL2}
\operatorname{cov}(W_{n+m}-W_{n+m-1},W_{n+m-1}-W_n)=\\
{\bf E}\left({\bf E}\left(\left.\left(W_{n+m}-W_{n+m-1}\right)\right|W_{n+m-1}\right) (W_{n+m-1} - W_n)\right)=0,\ n,m\in \mathbb{N},\nonumber
\end{eqnarray}
so, for any natural $m,n$
\begin{equation}
\label{L2Ineq}
{\bf E}\left(\left.(W_{n+m}-W_n)^2\right|Z_0=k\right) = \sum_{i=0}^{m-1} {\bf E}\left(\left.(W_{n+i+1}-W_{n+i})^2\right|Z_0=k\right)\le 
k a_n.
\end{equation}
Thus, the condition $a_1<+\infty$ leads us to the convergence of $\{W_n\}$ in $L^2({\bf P})$ by the Cauchy convergence test: for any positive $\varepsilon$, any natural $k$, all large enough $m$ and any $n$ we have
$$
{\bf E}\left(\left.\left|W_{m+n}- W_m \right|^2\right|Z_0=k\right)\le k \sum_{i=m+1}^{m+n} \zeta_m e^{-S_{m-1}}<\varepsilon.$$
Since ${\bf E}W_n={\bf E}W_0$ we have ${\bf E}W={\bf E}W_0$. Thus, ${\bf P}(W>0)>0$. 

The main part of Theorem~\ref{BPVETh} is~(\ref{WandZn}). 
Let us introduce the events
$$
D_Z:= \{Z_i>0,\ i>0\},\quad D_W:= \{W>0\}.
$$
We need to show that ${\bf P}(D_Z\setminus D_W)=0$. 

By~(\ref{Ass2}) for some positive $M$ 
$$
a_l = \sum_{j=0}^{\infty} \zeta_{l+j} e^{S_{l+j}-S_l} < M.
$$
Let $K>8M$ be a natural number and let $\tau_0=\nu_0=0$,
\begin{eqnarray*}
\tau_l = \min\left\{j>\nu_{l-1}: a_j < M,\ Z_{j}\ge K\right\},\\
\nu_l = \min\left\{i>\tau_l: \left(\frac{Z_i}{e^{S_i - S_{\tau_l}}} < \frac{Z_{\tau_l}}{2} \right)\right\},\ l\ge 1.
\end{eqnarray*}
Note that on $D_Z$ the random variable $\tau_l$ is a.s. finite if $\nu_{l-1}<+\infty$ due to~(\ref{Ass1}) and~(\ref{Ass2}).
 If $N=\max\{l: \nu_l<+\infty\}<+\infty$, then
$$
\frac{Z_{\tau_{N+1}+i}}{e^{S_{\tau_{N+1}+i}}}\ge \frac{Z_{\tau_{N+1}}}{2e^{S_{\tau_{N+1}}}}, i>0,
$$
and $W$ is positive. So, we need to prove that only a finite number of $\{\nu_l\}$ occurs a.s.

Thus, for any $l,m\in \mathbb{N}$ and any $k\ge K$ from~(\ref{L2Est}) we have 
\begin{eqnarray*}
{\bf P}\left(\left.\exists j>\tau_l: \frac{Z_j}{e^{S_j-S_{\tau_l}}}<\frac{k}{2}\right|Z_{\tau_l}=k,\tau_l=m\right) =\\ 
{\bf P}\left(\left. \max_{j> 0} \left| \frac{Z_{\tau_{l}+j}}{e^{S_{\tau_{l}+j}-S_{\tau_l}}} - Z_{\tau_{l}} \right| \geq \frac{k}{2}\right|Z_{\tau_l}=k, \tau_l=m\right)\le \\
{\bf P}\left(\left.\sum_{j=1}^{\infty} 
 \left( \frac{Z_{m+j}}{e^{S_{m+j}-S_m}} - \frac{Z_{m+j-1}}{e^{S_{m+j-1}-S_m}} \right)^2 > \frac{k^2}{4}
\right| Z_{m} = k \right)<
\frac{4a_{m}k}{k^2}<\frac{4M}{K}<\frac12.
\end{eqnarray*}
Therefore, for any natural $n$
$$
{\bf P}(\nu_n<+\infty) = 
{\bf E}\left({\bf P}(\nu_n<+\infty|Z_{\tau_{n-1}}, \tau_{n-1}) I_{\nu_{n-1}<+\infty}\right) < \frac{1}{2} {\bf P}(\nu_{n-1}<\infty)<\frac{1}{2^n}.
$$
So, $N$ is a.s. finite on the event $D_Z$. This proves the Theorem.
\end{proof}
A direct application of Theorem~\ref{BPVETh} is the following theorem.
\begin{thm}
\label{FuncTh}
Let $\{r_n\}$ be a sequence of natural numbers such that $r_n=o(n)$, $r_n\to +\infty$, $n\to\infty$. Let
$$
Y_n(t) = \frac{Z_{[r_n+(n-r_n)t]}}{\exp(S_{[r_n+(n-r_n)t]})},\ t\in [0,1].
$$
Let 
$${\bf Q}_n(A)={\bf P}(Y_n(\cdot)\in A|Z_n>0),\quad 
{\bf Q}(A) = {\bf P}(Y(\cdot)\in A),
$$
for every Borel set $A$ from the cadlag space $D[0,1]$, equipped with the Skorohod topology, where $Y(\cdot)$ is a process with a.s. constant trajectories and $Y(1)\stackrel{d}{=}W$. Then 
${\bf Q}_n\stackrel{w}{\to} {\bf Q}$ as $n\to\infty.$
\end{thm}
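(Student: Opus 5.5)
Theorem~\ref{FuncTh} is implicitly stated under the hypotheses of Theorem~\ref{BPVETh}, which I will use. By that theorem, $W_n\to W$ a.s. (an $L^2$-bounded martingale) and in $L^2$. In addition, ${\bf P}(W>0)={\bf P}(D_Z)>0$, so $W=0$ a.s. on the complement of $D_Z$.

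The plan is to show that, conditionally on $\{Z_n>0\}$, the whole path $Y_n(\cdot)$ is uniformly close to the constant path $W$. The natural interpretation of the statement is that $Y(\cdot)\equiv W$ in law under ${\bf P}(\cdot\mid D_Z)$. I would first check which normalization is intended: conditioning on survival rescales the mass, and the author writes $Y(1)\stackrel{d}{=}W$. I will show the limit is the law of the constant path equal to $W$ under ${\bf P}(\cdot\mid D_Z)$.

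\textbf{Step 1 (uniform closeness).} Set $m_n=r_n\to\infty$. The sup metric dominates the Skorohod metric, and
$$
\sup_{t\in[0,1]}|Y_n(t)-W|\le \sup_{j\ge m_n}|W_j-W|.
$$
The right-hand side tends to $0$ a.s. because $W_j\to W$ a.s. One can also get this quantitatively from Doob's inequality and~(\ref{L2Ineq}):
$$
{\bf E}\sup_{j\ge m}(W_j-W_m)^2\le 4\sum_{i\ge m}\zeta_{i+1}e^{-S_i}\to0,
$$
which is the tail of the convergent series $a_1$. Hence $d(Y_n,\bar W)\to0$ a.s., where $\bar W$ denotes the constant path equal to $W$.

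\textbf{Step 2 (conditioning).} The events $\{Z_n>0\}$ decrease to $D_Z$ and ${\bf P}(D_Z)>0$, so ${\bf P}(\{Z_n>0\}\triangle D_Z)\to0$. Take $f$ bounded and continuous on $D[0,1]$. Then
$$
{\bf E}(f(Y_n);Z_n>0)-{\bf E}(f(\bar W);D_Z)\to0,
$$
using dominated convergence for $f(Y_n)\to f(\bar W)$ together with the vanishing symmetric difference. Dividing by ${\bf P}(Z_n>0)\to{\bf P}(D_Z)$ gives ${\bf Q}_n\stackrel{w}{\to}$ the law of $\bar W$ under ${\bf P}(\cdot\mid D_Z)$. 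The condition $r_n=o(n)$ only guarantees that the time window $[r_n,n]$ is nondegenerate.

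\textbf{Main obstacle.} The only delicate point is identifying the limit as stated. By~(\ref{WandZn}), $W=0$ off $D_Z$, so the conditional law of $W$ given $D_Z$ is the law of $W$ restricted to $\{W>0\}$. This is exactly the meaning of "$Y(1)\stackrel{d}{=}W$ given survival". If the unconditional reading is meant, it holds only when ${\bf P}(D_Z)=1$, and I would note this. The analytic content reduces entirely to Theorem~\ref{BPVETh}: Step 1 needs only a.s. convergence of $W_n$, and Step 2 needs the survival identity~(\ref{WandZn}).
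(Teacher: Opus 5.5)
Your proof is correct and follows the paper's argument. You use Theorem~\ref{BPVETh} to get a.s. uniform (hence Skorokhod) convergence of $Y_n(\cdot)$ to the constant path $W$, then apply dominated convergence on the survival event and divide by ${\bf P}(Z_n>0)\to{\bf P}(D_Z)>0$. Your identification of the limit as the law of the constant path $W$ under ${\bf P}(\cdot\mid D_Z)={\bf P}(\cdot\mid W>0)$ is exactly what the paper's step ${\bf E}\phi(Y_n)I_{Y_n(1)>0}\to{\bf E}\phi(Y)I_{Y(1)>0}$ yields, and it is more precise than the paper's literal description of ${\bf Q}$.
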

\begin{proof}
Let $Y(1)=W$. By Theorem~\ref{BPVETh} we have 
$$Y_n(\cdot)\to Y(\cdot),\ n\to\infty,$$ 
in $D[0,1]$, equipped with the uniform topology (thus, with the Skorokhod topology as well). Similarly, $I_{Y_n(1)>0}$ converges to $I_{Y(1)>0}$ a.s. Thus, for every continuous bounded function $\phi$ on $D[0,1]$, equipped with the Skorohod topology, we have
$$
{\bf E}\phi(Y_n(\cdot)) I_{Y_n(1)>0}\to 
{\bf E}\phi(Y(\cdot)) I_{Y(1)>0},\ n\to\infty,
$$
by the Lebesgue dominated convergence theorem. Thus, 
$$
{\bf E}_{{\bf Q}_n}\phi(Y_n(\cdot))\to 
{\bf E}_{{\bf Q}}\phi(Y(\cdot)),\ n\to\infty.
$$
This proves the Theorem. 
\end{proof}
In the case of BPRE Theorem~\ref{FuncTh} helps us to obtain a Central Limit Theorem and a Functional Limit Theorem for $\{Z_n\}$, conditioned to survive, if we know the corresponding result for $\{S_n\}$.
\subsection{General Case of BPVE}
\label{GeneralSec}
We use variances in Theorem~\ref{BPVETh} to make condition~(\ref{Ass2}) similar to conditions of \cite{Kersting}. However, instead of $L^2$-norm we can consider other distances.

Let $\psi(x)=x\phi(x)$, where $\psi$ is an even smooth function on $\mathbb{R}$, concave on $[0,\infty)$ and $\phi(x)$ is an increasing function on $[0,\infty)$. Then by Theorem 1.1 of \cite{Pinelis}
as $Z_0=k$, $k\in \mathbb{N}$, we have
\begin{eqnarray*}
{\bf E}\psi(W_m - W_{m-1}) = 
{\bf E}\left({\bf E}\left(\left.\psi\left(\sum_{i=1}^{Z_{m-1}} \left(X_{m,i}-e^{\xi_m}\right) e^{-S_m}\right)\right|W_{m-1}\right) \right)\le \\
C {\bf E} Z_{m-1} {\bf E}\psi\left(\left(X_{m,1}-e^{\xi_m}\right) e^{-S_m}\right) = 
C k {\bf E}\left(U_m\phi\left(U_m e^{-S_{m-1}}\right)\right),
\end{eqnarray*}
where $U_m=|X_{m,1}e^{-\xi_m}-1|$ and $C$ is some constant. Using this and Theorem 1.1 we get for any natural $m$, $n$
\begin{equation}
\label{PhiWIneq}
{\bf E}\left(\left.\psi(W_{m+n} - W_{m})\right|Y_0=k\right)\le 
C^2 k \sum_{j=1}^{n-1} {\bf E}\left(U_{m+j}\phi\left( U_{m+j} e^{-(S_{m+j-1}-S_m)}\right)\right).
\end{equation}
Let
$$
a_m^{(\psi)} = C^2 k \sum_{j=1}^{\infty} {\bf E}\left(U_{m+j}\phi\left( U_{m+j} e^{-(S_{m+j-1}-S_m)}\right)\right),\ m\in \mathbb{N}_0.
$$
\begin{thm}
\label{BPVEWeakTh}
Let $\{Z_n\}$ be a BPVE, let ${\bf E}X_{n,1}$
 be positive and finite for every $n\in\mathbb{N}$, and let $\xi_n=\log {\bf E}X_{n,1}$, $n\in \mathbb{N}$. Assume that 
$$
{\bf E}\left(U_{m+j}\phi\left(U_{m+j} e^{-(S_{m+j-1}-S_m)}\right)\right)<+\infty
$$
for any natural $m$, $j$.
Assume~(\ref{Ass1}) and 
\begin{equation}
\label{Ass2ForPsi}
\liminf_{l} a_l^{(\psi)} < +\infty.  
\end{equation}
Then $\{W_n\}$ converges a.s. and in 
$L^1$
to $W$ and
$$
{\bf P}(W>0) = {\bf P}(Z_n>0, n>0) > 0.
$$
\end{thm}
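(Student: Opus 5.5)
I will follow the scheme of the proof of Theorem~\ref{BPVETh}, replacing the quadratic function by $\psi$ and the Chebyshev inequality by a Markov-type inequality for $\psi$.

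\emph{Step 1: convergence.} $\{W_n\}$ is a nonnegative martingale, so it converges a.s. to some $W$. For $L^1$ convergence, (\ref{PhiWIneq}) with $m=0$ gives ${\bf E}\psi(W_n-W_0)\le a_0^{(\psi)}$, and $a_0^{(\psi)}$ is finite. Finiteness of one $a_l^{(\psi)}$ (guaranteed by (\ref{Ass2ForPsi})) propagates to all $l$: as in the proof of Theorem~\ref{BPVETh}, $a_l^{(\psi)}$ and $a_{l+1}^{(\psi)}$ differ by one finite term and a rescaling inside $\phi$ by the bounded factor $e^{-\xi_{l}}$. For the rescaling I use concavity of $\psi$ and $\psi(0)=0$, which give $\phi(cx)\le \max(c,1)\phi(x)$. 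Since $\psi(x)/|x|=\phi(|x|)\to\infty$ (if $\phi$ is bounded we are in the trivial $L^1$ case, which needs separate but easy handling), the family $\{W_n\}$ is uniformly integrable by de la Vallée-Poussin. Hence $W_n\to W$ in $L^1$, ${\bf E}W=1$, and ${\bf P}(W>0)>0$.

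\emph{Step 2: stopping times.} I fix $M$ with $a_l^{(\psi)}<M$ for infinitely many $l$, and I note that $a_l^{(\psi)}$ computed with $Z_0=k$ is linear in $k$; write $a_l^{(\psi)}=k\,b_l$ with $\liminf b_l<M$. I define $\tau_l,\nu_l$ exactly as before: $\tau_l$ is the first time after $\nu_{l-1}$ with $b_j<M$ and $Z_j\ge K$, and $\nu_l$ is the first time the renormalised process $Z_i e^{-(S_i-S_{\tau_l})}$ drops below $Z_{\tau_l}/2$. By (\ref{Ass1}), on $D_Z$ we have $Z_n\to\infty$, so $\tau_l<\infty$ whenever $\nu_{l-1}<\infty$. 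If only finitely many $\nu_l$ are finite, then $W>0$.

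\emph{Step 3: the key estimate, which is the main obstacle.} Conditionally on $\tau_l=m$ and $Z_m=k\ge K$, I need
\[
{\bf P}\Bigl(\sup_{j>0}\bigl|Z_{m+j}e^{-(S_{m+j}-S_m)}-k\bigr|\ge k/2\Bigr)<\tfrac12 .
\]
The process $V_j=Z_{m+j}e^{-(S_{m+j}-S_m)}-k$ is a martingale and $\psi$ is convex on $\mathbb{R}$ only if we are careful: $\psi$ is concave on $[0,\infty)$, so $\psi(V_j)$ is not a submartingale in general. This is the real difficulty. I plan to use the maximal inequality underlying Theorem~1.1 of \cite{Pinelis} (a Burkholder/Doob-type bound ${\bf E}\sup_j\psi(V_j)\le C\sup_j{\bf E}\psi(V_j)$ for such $\psi$); this is presumably why the constant is $C^2$ in $a^{(\psi)}$. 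Failing that, I would apply Doob's inequality to the convex majorant $\tilde\psi(x)=|x|\phi(|x|)$ restricted to the scale $k/2$. Markov's inequality then gives a bound $C\,k\,b_m/\psi(k/2)=2Cb_m/\phi(k/2)<2CM/\phi(K/2)$, which is less than $1/2$ once $K$ is large because $\phi$ increases to infinity. Note that here the normalisation matters: the increments must be measured relative to $e^{S_m}$, which matches the argument of $\phi$ in $a_m^{(\psi)}$.

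\emph{Step 4: conclusion.} By the strong Markov property at $\tau_{n-1}$, ${\bf P}(\nu_n<\infty)\le \frac12{\bf P}(\nu_{n-1}<\infty)\le 2^{-n}$. So by Borel–Cantelli only finitely many $\nu_l$ occur, which gives ${\bf P}(D_Z\setminus D_W)=0$. Since $\{W>0\}\subset D_Z$ trivially, the two events coincide up to a null set.
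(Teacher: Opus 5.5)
Your overall route is the paper's. Its proof of this theorem consists only of the instruction to rerun the argument of Theorem~\ref{BPVETh} with (\ref{PhiWIneq}) and (\ref{Ass2ForPsi}) in place of (\ref{L2Ineq}) and (\ref{Ass2}), together with Markov's inequality. Your Steps 1, 2 and 4 implement exactly that: uniform integrability via de la Vall\'ee-Poussin, propagation of finiteness of $a_l^{(\psi)}$, the stopping times, and ${\bf P}(\nu_n<\infty)\le 2^{-n}$. You are also right that the crossing estimate needs a maximal inequality and not plain Markov. The paper hides this even in the $L^2$ case: its displayed pathwise comparison of $\max_j|\cdot|$ with the sum of squared increments is not valid, and the bound $4a_m/k$ really comes from Kolmogorov's inequality.

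The gap is in how you handle Step 3. You take ``$\psi$ concave on $[0,\infty)$'' literally, but that is incompatible with what you use elsewhere. Concavity and $\psi(0)=0$ make $\phi(x)=\psi(x)/x$ nonincreasing, so an increasing $\phi$ must be constant. Then neither de la Vall\'ee-Poussin in Step 1 nor $\phi(K/2)\to\infty$ in Step 3 is available.

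The hypothesis has to be read as in Pinelis's theorem: $\psi$ even and convex with $\psi'$ concave on $[0,\infty)$. This is also what makes $\psi$ a Young function, as the remark after the theorem asserts. Under this reading $\phi(x)=\int_0^1\psi'(tx)\,dt$ is concave and nondecreasing, which is the correct justification of your $\phi(cx)\le\max(c,1)\phi(x)$. The ``real difficulty'' then disappears. $\psi(V_j)$ is a nonnegative submartingale, so Doob's weak-type inequality gives directly ${\bf P}(\sup_j|V_j|\ge k/2)\le \sup_n{\bf E}\psi(V_n)/\psi(k/2)\le 2b_m/\phi(k/2)<2M/\phi(K/2)$.

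You should drop your primary plan, a strong-type bound ${\bf E}\sup_j\psi(V_j)\le C\sup_j{\bf E}\psi(V_j)$, for two reasons. First, Theorem~1.1 of \cite{Pinelis} is used here as a von Bahr--Esseen-type moment bound for sums, not as a maximal inequality. The $C^2$ in (\ref{PhiWIneq}) comes from applying it twice: once to the i.i.d.\ offspring within a generation, then to the martingale differences across generations. Second, such a strong-type bound is false in general for $\psi$ as close to linear as $|x|\log^{\delta/2}(1+|x|)$; this is the $L\log L$ loss in Doob's inequality. Your fallback ``convex majorant $|x|\phi(|x|)$'' is just $\psi$ itself, so it works precisely once convexity is accepted.

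Finally, bounded $\phi$ is not a trivial case. If $\psi$ grows only linearly, a bound on ${\bf E}\psi(W_n-W_0)$ gives neither uniform integrability nor a small $2M/\phi(K/2)$. So simply assume $\phi(x)\to\infty$, as the paper tacitly does.
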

\begin{proof}
The convergence of $\{W_n\}$ in $L^1({\bf P})$ is a corollary of~\ref{PhiWIneq}. Since ${\bf E}W_n={\bf E}W_0$ for every $n$ we have ${\bf E}W>0$. The proof of statement~(\ref{WandZn}) is based only on~(\ref{Ass1}), (\ref{Ass2}), the Markov inequality and~(\ref{L2Ineq}). Using instead~(\ref{Ass2ForPsi}) and (\ref{PhiWIneq}), we obtain the Theorem.
\end{proof}
\begin{rmk}
Since $\psi$ is a Young function, we can introduce the Orlicz space $(L_{\psi}({\bf P}), ||\cdot||_{\psi})$. Under the conditions of Theorem~\ref{Ass2ForPsi}, we can conclude that the sequence $\{W_n\}$ converges to $W$ in $L_{\psi}({\bf P})$ by Theorem 1.5 of \cite{Orlicz}, since we have 
$$
{\bf E}\psi(W_n-W_m)\to 0,\ n,m\to\infty.
$$

\end{rmk}
A nice particular case is $\psi(x)=|x|^{1+\delta}$ for some $\delta\in (0,1]$. Then 
\begin{equation}
\label{Ass2L1+delta}    
\liminf_{l \to \infty} a_l^{(\delta)}<+\infty, \ a_{l}^{(\delta)}:=\sum_{j=0}^{\infty} \zeta_{l+j}^{(\delta)} e^{-\delta (S_{l+j}-S_l)},
\end{equation}
is equivalent to~(\ref{Ass2ForPsi}).

Thus, we obtain the following Corollary.
\begin{cor}
\label{BPVEWeakTh}
Let $\{Z_n\}$ be a BPVE, let ${\bf E}X_{n,1}$ be positive and finite for every $n$, and let $\xi_n=\log {\bf E}X_{n,1}$, $n\ge 1$. Assume~(\ref{Ass1}).
Let $\zeta_i^{(\delta)}$ be finite for some positive $\delta$ and assume~(\ref{Ass2L1+delta}). Then the martingale $\{W_n\}$ converges in $L^{1+\delta}({\bf P})$ to some random variable $W$ and 
$$
{\bf P}(W>0) = {\bf P}(Z_n>0, n>0)>0.
$$
\end{cor}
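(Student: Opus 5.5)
The plan is to follow the scheme of the proof of Theorem~\ref{BPVETh} and replace the $L^2$ estimate~(\ref{L2Est}) by a von Bahr--Esseen type bound in $L^{1+\delta}$. Here $\zeta_i^{(\delta)}$ is understood as ${\bf E}|X_{i,1}e^{-\xi_i}-1|^{1+\delta}$, the normalized centered $(1+\delta)$-th moment. Conditionally on $Z_{m-1}$, the increment is
$$W_m-W_{m-1}=e^{-S_{m-1}}\sum_{i=1}^{Z_{m-1}}\left(X_{m,i}e^{-\xi_m}-1\right),$$
a sum of i.i.d.\ centered terms. For $\delta\in(0,1]$ the von Bahr--Esseen inequality gives ${\bf E}|\sum_{i\le n} Y_i|^{1+\delta}\le 2\sum_{i\le n}{\bf E}|Y_i|^{1+\delta}$. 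This is exactly the instance $\psi(x)=|x|^{1+\delta}$, $\phi(x)=|x|^\delta$ of the Pinelis bound used before Theorem~\ref{BPVEWeakTh}. Conditioning on $Z_0=k$ and using ${\bf E}Z_{m-1}=ke^{S_{m-1}}$, we get
$${\bf E}\left(|W_m-W_{m-1}|^{1+\delta}\,\middle|\,Z_0=k\right)\le 2k\,\zeta^{(\delta)}_m e^{-\delta S_{m-1}}.$$
As in the proof of Theorem~\ref{BPVETh}, the recursion $a^{(\delta)}_l=\zeta^{(\delta)}_l+e^{-\delta\xi_l}a^{(\delta)}_{l+1}$ together with~(\ref{Ass2L1+delta}) shows that every $a_l^{(\delta)}$ is finite. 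The indexing ($S_{m-1}$ versus $S_m$) should be checked against the definition of $a_l^{(\delta)}$; it only shifts by a finite factor $e^{\delta\xi}$.

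For the sum of increments we cannot use orthogonality as in~(\ref{MartingaleL2}). Instead we apply the martingale version of von Bahr--Esseen (or Burkholder's inequality together with subadditivity of $x\mapsto x^{(1+\delta)/2}$): for martingale differences $d_j$ and $1+\delta\le2$,
$${\bf E}\Bigl|\sum_j d_j\Bigr|^{1+\delta}\le C_\delta\sum_j{\bf E}|d_j|^{1+\delta}.$$
Hence
$${\bf E}\left(|W_{n+m}-W_n|^{1+\delta}\,\middle|\,Z_0=k\right)\le C k\sum_{j\ge n}\zeta^{(\delta)}_{j+1}e^{-\delta S_j}=Ck\,e^{-\delta S_n}a^{(\delta)}_{n+1}.$$
This is the analogue of~(\ref{L2Ineq}) and of~(\ref{PhiWIneq}). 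Since $a^{(\delta)}_1<\infty$, the tail sums of the series tend to zero, so $\{W_n\}$ is Cauchy in $L^{1+\delta}$ and converges to some $W$. By uniform integrability ${\bf E}W={\bf E}W_0=1$, so ${\bf P}(W>0)>0$.

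For the identity ${\bf P}(W>0)={\bf P}(D_Z)$ we repeat the stopping time argument. Fix $M$ with $a^{(\delta)}_l<M$ for infinitely many $l$ and choose $K$ large. Define $\tau_l,\nu_l$ as before, with $a^{(\delta)}_j<M$ replacing $a_j<M$; by~(\ref{Ass1}) these are a.s.\ finite on $D_Z$. Given $Z_{\tau_l}=k\ge K$ and $\tau_l=m$, the process $V_j=Z_{m+j}e^{-(S_{m+j}-S_m)}$ is a martingale started at $k$. Doob's maximal inequality in $L^{1+\delta}$, applied with the bound above for the shifted environment, gives
$${\bf P}\Bigl(\sup_j|V_j-k|\ge k/2\Bigr)\le \frac{2^{1+\delta}C\,k\,a^{(\delta)}_{m}}{k^{1+\delta}}\le \frac{2^{1+\delta}CM}{K^{\delta}}<\frac12$$
once $K$ is large. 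The geometric bound ${\bf P}(\nu_n<\infty)<2^{-n}$ then follows exactly as before, so $N$ is finite and $W>0$ a.s.\ on $D_Z$.

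The main obstacle is the martingale $L^{1+\delta}$ inequality, which replaces the exact orthogonality used in Theorem~\ref{BPVETh}. I would cite the von Bahr--Esseen result for martingale differences, with constant $2$, rather than rederive it. The price is that the Markov step now decays only like $K^{-\delta}$ instead of $K^{-1}$, which is harmless because $K$ can be taken arbitrarily large.
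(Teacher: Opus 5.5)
\textbf{The step that fails.} Your route is the paper's own. The corollary is the $\psi(x)=|x|^{1+\delta}$ case of the argument in Subsection~\ref{GeneralSec}: a von Bahr--Esseen/Pinelis bound for each increment, the same bound for the martingale sum as in~(\ref{PhiWIneq}), the Cauchy criterion, and then the stopping-time argument of Theorem~\ref{BPVETh}. Your use of Doob's inequality for the submartingale $|V_j-k|^{1+\delta}$ is a good choice; the paper's corresponding step ${\bf P}(\max_j|\cdot|\ge k/2)\le{\bf P}(\sum_j(\cdot)^2>k^2/4)$ is not an inclusion of events.

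What fails is your claim that the index mismatch ``only shifts by a finite factor $e^{\delta\xi}$''. Starting from $Z_m=k$, your moment bound gives $Ck\,b_m$ with $b_m=\sum_{j\ge1}\zeta^{(\delta)}_{m+j}e^{-\delta(S_{m+j-1}-S_m)}$. The $j$-th term of $b_m$ is $e^{\delta\xi_{m+j}}$ times the corresponding term of the displayed $a^{(\delta)}_m=\sum_{j\ge0}\zeta^{(\delta)}_{m+j}e^{-\delta(S_{m+j}-S_m)}$. That factor depends on $j$ and is unbounded in a varying environment. So neither your identity $\sum_{j\ge n}\zeta^{(\delta)}_{j+1}e^{-\delta S_j}=e^{-\delta S_n}a^{(\delta)}_{n+1}$ nor your bound $2^{1+\delta}Cka^{(\delta)}_m/k^{1+\delta}$ follows from~(\ref{Ass2L1+delta}) as displayed.

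\textbf{Why it matters.} With the displayed definition the corollary is false. Let $X\equiv1$ in even generations. In generation $2i-1$, let $X=q_i^{-2}$ with probability $q_i$ and $X=0$ otherwise, where $q_i=2^{-2^i}$.
\begin{itemize}
\item For large even $l$ the displayed $a^{(\delta)}_l$ is at most $4$. The next odd generation contributes $\zeta^{(\delta)}e^{-\delta\xi}\le(q^{-\delta}+1)q^{\delta}$, and later odd generations carry extra factors $q^{\delta}$.
\item Since $q_i^{-2}=q_{i+1}^{-1}$, the number $B_i$ of successful parents in generation $2i-1$ satisfies $B_{i+1}\sim\mathrm{Bin}(B_i/q_{i+1},q_{i+1})$ given $B_i$.
\item So $(B_i)$ is a nonnegative integer martingale with nondegenerate steps, and it is absorbed at $0$.
\item Hence ${\bf P}(Z_n>0,\ n>0)=0$: (\ref{Ass1}) holds trivially, but the conclusion fails.
\end{itemize}

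\textbf{The fix.} What your computations actually use is $a^{(\delta)}_l=\sum_{i\ge0}\zeta^{(\delta)}_{l+i}e^{-\delta(S_{l+i-1}-S_{l-1})}$. For this definition your recursion $a^{(\delta)}_l=\zeta^{(\delta)}_l+e^{-\delta\xi_l}a^{(\delta)}_{l+1}$ and your identity are exact. It is the indexing of the formula in the introduction. The paper's displays~(\ref{Ass2}) and~(\ref{Ass2L1+delta}), and its proof of Theorem~\ref{BPVETh}, contain the same slip. With this definition $b_m=a^{(\delta)}_{m+1}$. So define $\tau_l$ through $a^{(\delta)}_{j+1}<M$, which has the same $\liminf$, and the rest of your argument then goes through unchanged.
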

This generalizes Theorem~\ref{BPVETh}.
\section{Applications}
\label{Applications}
The result can be applied to different models of branching processes in a random environment. Our results have an important advantage with respect to those of~\cite{Kersting} -- they can be applied to BPRE without any uniform boundedness conditions like~(\ref{KestingA}). 

For branching processes in a random environment (not necessarily i.i.d.) the following lemma is useful.
\begin{lem}
\label{RegeneratingEnvLemm}
Assume that $\{Z_n\}$ is a BPRE with the environment $\mathrm{R}$. If $R_i$ are independent, then~ (\ref{Ass2}) is a.s. true iff 
$$
a_l\not\stackrel{d}{\to} +\infty,\ l\to\infty.
$$
\end{lem}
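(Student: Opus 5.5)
We need to show that for an independent environment, the event $\{\liminf_l a_l<\infty\}$ has probability 1 if and only if $a_l$ does not tend to $+\infty$ in distribution. Here $a_l=\sum_{i\ge0}\zeta_{i+l}e^{-(S_{i+l}-S_l)}$ depends only on $(R_l,R_{l+1},\dots)$. The plan is to combine a tail (Kolmogorov 0--1) argument with the elementary relation between $\liminf$ and convergence in distribution to infinity.

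\emph{Tail step.} Fix $M$ and consider the event $A=\{\liminf_l a_l<\infty\}$. For every fixed $n$ we have $A=\{\liminf_{l\ge n} a_l<\infty\}$, and each $a_l$ with $l\ge n$ is a measurable function of $(R_n,R_{n+1},\dots)$, with values in $[0,\infty]$. Hence $A$ belongs to the tail $\sigma$-algebra $\bigcap_n\sigma(R_n,R_{n+1},\dots)$. Since the $R_i$ are independent, Kolmogorov's 0--1 law gives ${\bf P}(A)\in\{0,1\}$. So it suffices to show
$$
{\bf P}(A)>0 \iff a_l\not\stackrel{d}{\to}+\infty .
$$

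\emph{Direction ``$\Rightarrow$''.} Suppose ${\bf P}(A)=1$. Then
$$
{\bf P}\Bigl(\bigcup_M\{a_l<M \text{ i.o.}\}\Bigr)=1,
$$
so there is an $M$ with ${\bf P}(a_l<M\text{ i.o.})=c>0$. By reverse Fatou,
$$
\limsup_l {\bf P}(a_l<M)\ge {\bf P}\bigl(\limsup_l\{a_l<M\}\bigr)=c>0,
$$
so $a_l$ does not tend to $+\infty$ in distribution.

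\emph{Direction ``$\Leftarrow$''.} Conversely, if $a_l\not\stackrel{d}{\to}+\infty$, there are $M$, $c>0$ and a subsequence $l_k$ with ${\bf P}(a_{l_k}<M)\ge c$. Again by reverse Fatou,
$$
{\bf P}(a_{l_k}<M\text{ i.o.})\ge \limsup_k{\bf P}(a_{l_k}<M)\ge c>0 .
$$
This event is contained in $A$, so ${\bf P}(A)\ge c>0$, and the 0--1 law then gives ${\bf P}(A)=1$.

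\emph{Main obstacle.} The only delicate point is justifying tail measurability. The quantity $a_l$ may be $+\infty$, but it is still a measurable $[0,\infty]$-valued function of $(R_l,R_{l+1},\dots)$. The recursion $a_l=a_{l+1}e^{-\xi_l}+\zeta_l$ shows that finiteness of $\liminf$ is unaffected by the first coordinates. The point is that $A$ is defined through a $\liminf$, and not through a fixed $a_l$, which does depend on finitely many early coordinates. One should also note that $\xi_i,\zeta_i$ are measurable functions of $R_i$ alone, which holds because they are the mean and normalized variance of $R_i$.
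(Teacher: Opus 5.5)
Your ``$\Leftarrow$'' direction is fine. The ``$\Rightarrow$'' direction, however, rests on an inequality that goes the wrong way. Fatou's lemma for events gives $\limsup_l {\bf P}(E_l)\le {\bf P}(\limsup_l E_l)$, which is exactly what you use, correctly, in ``$\Leftarrow$''. It never gives the reverse: independent events with ${\bf P}(E_l)=1/l$ occur infinitely often a.s.\ while ${\bf P}(E_l)\to 0$. So ${\bf P}(a_l<M \text{ i.o.})>0$ does not give $\limsup_l {\bf P}(a_l<M)>0$.

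The step also cannot be repaired: for independent but not identically distributed $R_i$ the implication is false. Let $R_j=\mathrm{Poisson}(1)$ with probability $1-1/j$ and $R_j=\mathrm{Poisson}(e^{j})$ with probability $1/j$, independently. Then $\xi_j\in\{0,j\}$, $\zeta_j\le 1$, and $\zeta_j=1$ when $\xi_j=0$. On $\{\xi_l=\dots=\xi_{l+M}=0\}$ the first $M+1$ terms of $a_l$ equal $1$. Hence ${\bf P}(a_l\le M)\le (M+1)/l\to 0$, i.e.\ $a_l\stackrel{d}{\to}+\infty$. On the other hand, by the second Borel--Cantelli lemma $\xi_{l+1}=l+1$ for infinitely many $l$. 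For those $l$ we have $a_l=\zeta_l+e^{-(l+1)}a_{l+1}\le 1+e^{-(l+1)}a_{l+1}$. Since ${\bf P}(\xi_j=0 \text{ for all } j\in(n,n^3])=n^{-2}$ is summable, a.s.\ the successive jump times $t_0=l+1<t_1<\dots$ eventually satisfy $t_{k+1}\le t_k^3$. This gives $a_{l+1}\le \sum_{k\ge 0}(t_{k+1}-t_k)e^{-(t_1+\dots+t_k)}\le (l+1)^3+\sum_{m\ge1}m^3e^{-m}$, so $\liminf_l a_l\le 1$ a.s. Thus (\ref{Ass2}) holds a.s.\ although $a_l\stackrel{d}{\to}+\infty$.

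The paper does not close this gap either: its proof merely asserts that $a_l\stackrel{d}{\to}\infty$ makes (\ref{Ass2}) false. That direction is true, trivially, when the $R_i$ are i.i.d. Then $a_l\stackrel{d}{=}a_1$, so $a_l\stackrel{d}{\to}+\infty$ forces $a_l=+\infty$ a.s.\ for every $l$. Only ``$\Leftarrow$'' is used in the applications.

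For ``$\Leftarrow$'' your route differs from the paper's. The paper builds by hand a sequence of independent trials: follow the partial sums $a_{l_j,n}$ until they exceed $M$, then restart at the next $l_j$. It bounds $i$ consecutive failures by $(1-p)^i$ and concludes ${\bf P}(\exists j\ge k:\ a_j\le M)=1$ for every $k$. You instead get ${\bf P}(a_{l_k}<M\text{ i.o.})\ge c$ from reverse Fatou and upgrade it with Kolmogorov's 0--1 law. Your tail-measurability argument is sound, since each $a_l$ with $l\ge n$ is a function of $R_n,R_{n+1},\dots$, and it makes the proof shorter. Because $\{a_l<M\text{ i.o.}\}$ is itself a tail event, your argument even recovers the paper's $\liminf_l a_l\le M$ a.s.

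Separately, the recursion you quote from the paper should read $a_l=\zeta_l+e^{-\xi_{l+1}}a_{l+1}$, but you do not need it.
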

\begin{rmk}
Similar facts are true for conditions~(\ref{Ass2ForPsi}) and~(\ref{Ass2L1+delta}). 
\end{rmk}
\begin{proof}
Note that if $a_l\stackrel{d}{\to}\infty$, $l\to\infty$, then~(\ref{Ass2}) is false.

On the other hand, if $a_l$ doesn't converge to $+\infty$ in distribution, choose $M$ and a sequence $\{l_n\}$, such that ${\bf P}(a_{l_n}\le M)>p>0$ for some positive $p$. We prove that for any $k$ 
\begin{equation}
\label{FiniteLimInfRE}
{\bf P}(\exists j\ge k: a_{j}\le M)=1.
\end{equation}
This means that $\liminf a_j \le M$ with probability one.

Let $a_{l,n}$ denote the partial sums of $a_l$. Let  $\widetilde{\tau}_0=k$ and
$$
\widetilde{\nu}_{i}=\min\{l_j: l_j>\widetilde{\tau}_i\},\quad \widetilde{\tau}_{i+1} = \min\{j>0: a_{\widetilde{\nu}_{i},j}>M\}, \ 
i\ge 0.
$$
If $\widetilde{\tau}_i=\infty$ for some $i$, then we show that~(\ref{FiniteLimInfRE}) is true. However,
$$
{\bf P}(\widetilde{\tau}_{i}<\infty) = 
\prod_{j=1}^{i-1} {\bf P}(\widetilde{\tau}_{j+1}<+\infty|\widetilde{\tau}_{j}<+\infty)\le (1-p)^i\to 0,\ i\to\infty.
$$
\end{proof}

First we apply our results to a supercritical BPRE to show that the conditions are close to the best known. In fact, our result can be applied for a supercritical model of BPRE with cooling (see \cite{Korshunov} for definitions and details).

\subsection{Supercritical BPRE}

Let $\{Z_n\}$ be supercritical BPRE, let ${\bf E}\xi_1=\mu>0$ 
and assume that ${\bf E}X e^{-\xi} \log^{1+\delta} (1+X)$ is finite for some $\delta\in (0,1)$. Let $\psi(x)=|x| \log^{\delta/2} (1+|x|)$. 

By Lemma~\ref{RegeneratingEnvLemm} we need to show that $a_1^{(\psi)}$ is finite a.s. 

By the law of large numbers $S_i>\mu i/2$ for large enough $i$. Thus, it's enough to prove that 
$$
\sum_{j=0}^{\infty} {\bf E}_{\boldsymbol\eta} U_{j} \log^{\delta/2}\left(1+\frac{U_j}{e^{\mu j/2}}\right)<+\infty.
$$
But the $j$-th term of the sum above is less or equal than
\begin{eqnarray*}
{\bf E}_{\boldsymbol\eta} \left(e^{\delta \mu j/8} \log^{\delta/2}\left(1+\frac{e^{\mu j/8}}{e^{\delta \mu j/2}}\right); U_j\le e^{\delta \mu j/8} \right)+  
{\bf E}_{\boldsymbol\eta} \left(U_j \log^{\delta/2}\left(1+U_j\right); U_j\ge e^{\delta \mu j/8}\right) \le \\
\frac{1}{e^{\delta \mu j/16}} + 
\frac{1}{j^{1+\delta/2}} {\bf E}_{\boldsymbol\eta} U_j \log^{1+\delta}(1+U_j).
\end{eqnarray*}
Thus, 
$$
\sum_{j=0}^{\infty} {\bf E}_{\boldsymbol\eta} U_{j} \log^{\delta/2}\left(1+\frac{U_j}{e^{\mu j/2}}\right)\le \sum_{j=1}^{\infty} \frac{1}{e^{\delta \mu j/16}} + 
\sum_{j=1}^{\infty} \frac{{\bf E}U_1 \log^{1+\delta}(1+U_1)}{j^{1+\delta/2}}+
\sum_{j=1}^{\infty} 
\frac{\Delta_j}{j^{1+\delta/2}},
$$
where
$$
\Delta_j = {\bf E}_{\boldsymbol{\eta}} U_j \log^{1+\delta}(1+U_j) - {\bf E}U_1 \log^{1+\delta}(1+U_1),\ j\ge 1.
$$
The sequence
$$
D_n:=\sum_{j=1}^{n} \frac{\Delta_j}{j^{1+\delta/2}}, n\ge 1,
$$
is a martingale, and
$$
{\bf E}|\Delta_j|\le {\bf E} \left({\bf E}_{\boldsymbol{\eta}} U_j \log^{1+\delta}(1+U_j)\right) + 
{\bf E}U_1 \log^{1+\delta}(1+U_1) = 2{\bf E}U_1 \log^{1+\delta}(1+U_1),\ j\ge 1,
$$
so $\sup {\bf E}|D_n|<+\infty$. Thus, $\{D_n\}$ is a.s. convergent and $a_1^{(\psi)}$ is finite a.s. So, in the particular case of a supercritical BPRE our general result gives a sufficient condition ${\bf E}X e^{-\xi} \log^{1+\delta} (1+X e^{-\xi})<+\infty$, which is a little stronger than the optimal ${\bf E}Xe^{-\xi}\log^+ X<+\infty$ from the classical work~\cite{Tanny}. 

\subsection{Critical BPRE} 

For a critical BPRE the important result is that 
\begin{equation}
\label{P+ZnConv}
\frac{Z_n}{e^{S_n}}\to W,\ {\bf P}^+-a.s.,\ n\to\infty,\quad {\bf P}^+(W>0)={\bf P}^+(Z_n>0, n\in \mathbb{N}).
\end{equation}
See Proposition 3.1 of~\cite{AGKV} for details. We don't want to repeat the conditions of article~\cite{AGKV}, thus, we just sketch the proof of Proposition 3.1  using Theorem~\ref{BPVETh}.  
The assumption~(\ref{Ass1}) is (3.5) of~\cite{AGKV}. Let check~(\ref{Ass2}). By the Tanaka decomposition (see \cite{AGKV}) for some random moments $\nu_i$, $i>0$, the sequence
$\{a_{\nu_i}\}$ corresponds to a BPRE with an i.i.d. random environment (we join together $(\nu_i-\nu_{i-1})$ generations for every $i$). 
Thus, by Lemma~\ref{RegeneratingEnvLemm} it's enough to prove that
$$
a_1=\sum_{i=0}^{\infty} \zeta_{i} e^{-S_i}<+\infty,\ {\bf P}^+-a.s.  
$$
This is Lemma 2.7 of~\cite{AGKV}. Thus, general Theorem~\ref{BPVETh} can be used instead of Proposition 3.1 of~\cite{AGKV}. 

However, the proof of Proposition 3.1 is based on the p.g.f. technique and is based on the fact that the conditional variance ${\bf E}_{\boldsymbol \eta} X^2$ is a.s. finite. Our method is much more flexible. For example, we can consider $\psi(x) = |x| \log^{\delta/2}(1+|x|)$ and check~(\ref{Ass2ForPsi}) instead of~(\ref{Ass2}). 
\begin{thm}
\label{CriticalLogTh}
Assume the Spitzer condition
$$
\frac{1}{n} \sum_{k=1}^{n} {\bf P}(S_k>0)\to \rho\in (0,1),\ n\to\infty
$$
i) Let for some $\beta\in (0,1]$ 
$$\widehat{\zeta}_i:=\ln {\bf E}_{\boldsymbol\eta} U_i^{1+\delta}, i\ge 1,
$$
be a.s. finite r.v., such that 
$$
{\bf E}\widehat{\zeta}_1<+\infty.
$$

ii) Let for some $\beta>1$, $\delta_0\in (0,1)$ 
$$\widetilde{\zeta}_i:={\bf E}_{\boldsymbol\eta} U_i \log^{\beta/\rho+\delta_0}\left(1+U_i\right), i\ge 1,
$$
be a.s. finite r.v., such that
$$
{\bf E}\widetilde{\zeta}_1^{1/(\beta-1)}<+\infty,\quad 
{\bf E}V(\xi_1) \widetilde{\zeta}_1^{\rho/(\beta-1)}<+\infty,
$$
where $V(\cdot)$ is defined by (1.4) of~\cite{AGKV}. 

Then~(\ref{P+ZnConv}) is true.
\end{thm}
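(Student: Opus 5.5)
We follow the scheme used above for Proposition 3.1 of~\cite{AGKV}, but replace Theorem~\ref{BPVETh} by Theorem~\ref{BPVEWeakTh}, in case i) with $\psi(x)=|x|^{1+\delta}$ (that is, Corollary~\ref{BPVEWeakTh}) and in case ii) with $\psi(x)=|x|\log^{\beta/\rho+\delta_0}(1+|x|)$, adjusted if necessary so that $\phi$ is increasing and $\psi$ is concave near zero.

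Assumption~(\ref{Ass1}) under ${\bf P}^+$ is (3.5) of~\cite{AGKV}, exactly as in the sketch above. For the second assumption we again use the Tanaka decomposition. At the ladder epochs $\nu_i$ the blocks of the environment between consecutive $\nu_i$ are i.i.d. under ${\bf P}^+$, so Lemma~\ref{RegeneratingEnvLemm}, together with the Remark after it for $\psi$-conditions, reduces~(\ref{Ass2ForPsi}) or~(\ref{Ass2L1+delta}) to showing that $a_0^{(\psi)}<+\infty$ holds ${\bf P}^+$-a.s. Since $S$ is a.s. finite on initial segments, it is then enough to bound the tail
$$
\sum_{j}{\bf E}_{\boldsymbol\eta}\, U_j\,\phi\left(U_j e^{-S_{j-1}}\right).
$$

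In case i) the $j$-th term is at most $e^{\widehat\zeta_j}e^{-\delta S_{j-1}}$. Under ${\bf P}^+$ and the Spitzer condition, $S_n$ grows at least like $n^{\rho-\varepsilon}$ a.s.; this is the standard input behind Lemma 2.7 of~\cite{AGKV}. Since ${\bf E}\widehat\zeta_1<\infty$, Borel--Cantelli gives $\widehat\zeta_j=o(j^{\gamma})$ for every $\gamma>1$, which is too weak to be absorbed by $e^{-\delta S_{j-1}}$ if $S$ grows only polynomially. The plan is therefore to mimic the proof of Lemma 2.7 of~\cite{AGKV}. We treat $e^{\widehat\zeta_j}$ as playing the role of $\zeta_j$ there, use independence of $\widehat\zeta_j$ from $S_{j-1}$, and bound ${\bf E}^+$ of the truncated sum through the renewal function $V$ together with the estimate ${\bf P}(S_j>0\ldots)$ of order $j^{-(1-\rho)}$. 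Only a logarithmic moment of $e^{\widehat\zeta}$ should be needed, because the contributions are weighted by $e^{-\delta S}$ near the ladder heights.

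In case ii) we split at level $U_j\le e^{\varepsilon S_{j-1}}$, as in the supercritical computation. On the small part the term is at most $e^{\varepsilon S_{j-1}}\log^{\kappa}(1+e^{-(1-\varepsilon)S_{j-1}})$, which decays like $e^{-cS_{j-1}}$. On the large part we bound $\log(1+U_je^{-S_{j-1}})\le\log(1+U_j)$ and use $\log(1+U_j)\ge\varepsilon S_{j-1}$ to extract a factor $S_{j-1}^{-\beta/\rho}$. This leaves $\sum_j\widetilde\zeta_j S_{j-1}^{-\beta/\rho}$, which we must show is finite ${\bf P}^+$-a.s. Since $S_{j}\gtrsim j^{\rho}$ in the appropriate averaged sense under ${\bf P}^+$, the weights behave like $j^{-\beta}$ with $\beta>1$. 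Finiteness would then follow by truncating $\widetilde\zeta_j$ at $j^{\beta-1}$, which is where ${\bf E}\widetilde\zeta_1^{1/(\beta-1)}<\infty$ and Borel--Cantelli enter, and by controlling the truncated expectation under ${\bf P}^+$ via the change of measure with $V$. That change of measure is where ${\bf E}V(\xi_1)\widetilde\zeta_1^{\rho/(\beta-1)}<\infty$ is used.

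The main obstacle is the precise ${\bf P}^+$-lower growth of $S_j$: turning the heuristic $S_j\approx j^{\rho}$ into summable estimates, with exponents matching $\beta/\rho$, requires redoing the fluctuation-theory computation of Lemma 2.7 of~\cite{AGKV} with these heavier weights.
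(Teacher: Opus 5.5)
Your overall architecture matches the paper's: the Corollary with $\psi(x)=|x|^{1+\delta}$ in i), the $\psi$-version in ii), and the reduction via the Tanaka decomposition and Lemma~\ref{RegeneratingEnvLemm} to ${\bf P}^+$-a.s.\ finiteness of the first $a$. However, each part has a genuine gap.

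\emph{Part ii).} Your choice $\psi(x)=|x|\log^{\beta/\rho+\delta_0}(1+|x|)$ breaks the key reduction.
\begin{itemize}
\item With $\phi=\log^{\beta/\rho+\delta_0}(1+\cdot)$, the best you can say on $\{U_j>e^{\varepsilon S_{j-1}}\}$ is that the term is at most ${\bf E}_{\boldsymbol\eta}\bigl(U_j\log^{\beta/\rho+\delta_0}(1+U_j);\,U_j>e^{\varepsilon S_{j-1}}\bigr)$. So $\phi$ has already consumed the whole logarithmic moment contained in $\widetilde\zeta_j$.
\item Extracting the factor $S_{j-1}^{-\beta/\rho}$ would then require ${\bf E}_{\boldsymbol\eta}U_j\log^{2\beta/\rho+\delta_0}(1+U_j)<\infty$. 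Hence your claimed bound by $\sum_j\widetilde\zeta_jS_{j-1}^{-\beta/\rho}$ is false.
\item You would anyway need to modify $\psi$ near $0$ for the von Bahr--Esseen step, since $\psi(x)\sim|x|^{1+\beta/\rho+\delta_0}$ there. That modification does not help, because the problem is the size of $\phi$ at infinity.
\end{itemize}
The paper takes $\psi(x)=|x|\log^{\delta/2}(1+|x|)$ with $\delta$ small. It spends the slack $\delta_0$ on this $\delta/2$ and on the loss in the exponent caused by replacing $\rho$ by $\rho-\delta_1$.

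That loss comes from the ingredient you list as the main obstacle, and it needs no new fluctuation theory. By (2.22) of \cite{AGKV}, $S_k\ge k^{\rho-\delta_1}$ for all large $k$, ${\bf P}^+$-a.s. So on the large part $\log(1+U_j)\ge c\,j^{\rho-\delta_1}$, which gives a \emph{deterministic} weight $j^{-\beta-\delta/2}$.

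Nor is a truncated ${\bf P}^+$-expectation needed. The paper shows $\widetilde\zeta_j=O(j^{\beta-1+\delta/4})$ ${\bf P}^+$-a.s.\ by Borel--Cantelli. It bounds ${\bf P}^+(\widetilde\zeta_j>j^{\beta-1+\delta/4})$ via the comparison (2.25) of \cite{AGKV}, which gives a ${\bf P}$-term plus roughly $j^{-(1-\rho)}{\bf E}(V(\xi_1);\cdot)$. Markov's inequality with exactly the two stated moments then makes these bounds summable.

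\emph{Part i).} You are right that ${\bf E}\widehat\zeta_1<\infty$ only gives $\widehat\zeta_j=o(j)$, which $e^{-\delta S_{j-1}}$ cannot absorb. The paper nevertheless takes exactly this route: it reduces to $\widehat\zeta_j=O(j^{\rho-2\delta_1})$ ${\bf P}^+$-a.s.\ and applies (2.25) and Borel--Cantelli. Its displayed estimate for i) is copied from ii), so it does not establish i) either.

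Your remedy is still a genuine gap. Lemma~2.7 of \cite{AGKV} is itself proved under a moment of order $1/\rho+\varepsilon$ of the logarithm, not a first moment. Moreover, no argument through the Corollary can work, because (\ref{Ass2L1+delta}) may fail. Here is an example.
\begin{itemize}
\item Let $\xi_1=\pm\log2$ with probability $1/2$ each, so $\rho=1/2$.
\item Let the offspring law put mass $2^{-K}$ at $e^{\xi_1}2^{K}$ and the rest at $0$. Here $K\ge1$ is an integer independent of $\xi_1$ with ${\bf E}K<\infty={\bf E}K^{3/2}$.
\item Then $\widehat\zeta_1=\delta K\log2+O(1)$, so the hypothesis of i) holds.
\item Since ${\bf P}^+$ only reweights the walk, the $K_j$ remain i.i.d.\ and independent of $S$ under ${\bf P}^+$. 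Hence $K_j\ge j^{2/3}$ infinitely often, whereas $S_j=o(j^{2/3})$ ${\bf P}^+$-a.s.
\item Thus $e^{\widehat\zeta_j-\delta S_j}\to\infty$ along a subsequence. As $S_l\ge0$, every $a^{(\delta)}_l$ is infinite.
\end{itemize}
Part i) therefore needs a stronger hypothesis, e.g.\ ${\bf E}\widehat\zeta_1^{1/\rho+\varepsilon}<\infty$ and ${\bf E}V(\xi_1)\widehat\zeta_1^{1+\varepsilon}<\infty$. Under that hypothesis the Borel--Cantelli route gives $\widehat\zeta_j=O(j^{\rho-2\delta_1})$ directly, with no renewal computation.
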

\begin{rmk}
Note that in the moment conditions we can use 
$$
{\bf E}_{\boldsymbol\eta} X_1 e^{-\xi_1} \log^{\beta/\rho+\delta_0}\left(1+X_1 e^{-\xi_1}\right)
$$
instead of $\widetilde{\zeta}_1$.
\end{rmk}
\begin{proof}
The proof of~(\ref{Ass1}) is the same as before. 

i) To check~(\ref{Ass2L1+delta}) we need to prove that $a_1^{(\delta)}$ is ${\bf P}^+$-a.s. finite. Note that for any $\delta_1\in (0,1)$ and some $c(\boldsymbol\eta)$ we have $S_k\ge k^{\rho-\delta_1}$, $k>0$, for sufficiently large $k$ (see (2.22) of~\cite{AGKV}). Thus, it is enough to show that ${\bf P}^+$-a.s.
$$
\sum_{j=0}^{\infty} {\bf E}_{\boldsymbol\eta} U_{j}^{1+\delta} e^{-\delta j^{\rho-\delta_1}}<+\infty,
$$
so, it's enough to prove that
$$\widehat{\zeta}_j:= \ln {\bf E}_{\boldsymbol\eta} U_j^{1+\delta}  = O\left(j^{\rho - 2\delta_1}\right),\ j\to \infty.
$$
By (2.25) of~\cite{AGKV} we have
\begin{eqnarray*}
{\bf P}^+\left(\widehat{\zeta}_j>j^{\rho-2\delta_1}\right) \le 
{\bf P}\left(\widetilde{\zeta}_j>j^{\rho-2\delta_1}\right) + \frac{1}{j^{1-\rho-\delta}}
{\bf E}\left(V(\xi_1); \widetilde{\zeta}_j>j^{\beta+\delta/4-1}\right)\le 
\\
\frac{{\bf E}\widetilde{\zeta}_1^{1/(\beta-1)}}
{j^{1+\delta/(4(\beta-1))}}+\frac{{\bf E}V(\xi_1)\widetilde{\zeta}_1^{\rho/(\beta-1)}}{j^{1+\delta\rho/(8(\beta-1))}},\ j\ge 1.
\end{eqnarray*}

ii)  To check~(\ref{Ass2ForPsi}) we need to prove that $a_1^{(\psi)}$ is ${\bf P}^+$-a.s. finite. Again for any $\delta_1\in (0,1)$ and some $c(\boldsymbol\eta)$ we have $S_k\ge k^{\rho-\delta_1}$, $k>0$, for sufficiently large $k$. Thus, we need to check that
$$
\sum_{j=0}^{\infty} {\bf E}_{\boldsymbol\eta} U_{j} \log^{\delta/2}\left(1+\frac{U_j}{e^{j^{\rho-\delta_1}}}\right)<+\infty.
$$
Similarly to the supercritical case we obtain that 
$$
\sum_{j=0}^{\infty} {\bf E}_{\boldsymbol\eta} U_{j} \log^{\delta/2}\left(1+\frac{U_j}{e^{j^{\rho-\delta_1}}}\right)\le 
\sum_{j=1}^{\infty} \frac{1}{e^{\delta j^{\rho-\delta_1}/8}}+\sum_{j=1}^{\infty} \frac{\widetilde{\zeta}_j}{j^{\beta+\delta/2}},
$$
where $\delta$, $\delta_1$ are chosen small enough to have
$$\frac{\beta}{\rho}+\delta_0 \ge \frac{\delta}2+\frac{\beta}{\rho-\delta_1}.
$$
Thus, it's enough to prove that $\zeta_j = O(j^{\beta+\delta/4-1})$, $j\to\infty$, ${\bf P}^+$-a.s. By (2.25) of~\cite{AGKV} we have
\begin{eqnarray*}
{\bf P}^+\left(\widetilde{\zeta}_j>j^{\beta+\delta/4-1}\right) \le 
{\bf P}\left(\widetilde{\zeta}_j>j^{\beta+\delta/4-1}\right) + \frac{1}{j^{1-\rho-\delta\rho/(8(\beta-1))}}
{\bf E}\left(V(\xi_1); \widetilde{\zeta}_j>j^{\beta+\delta/4-1}\right)\le 
\\
\frac{{\bf E}\widetilde{\zeta}_1^{1/(\beta-1)}}
{j^{1+\delta/(4(\beta-1))}}+\frac{{\bf E}V(\xi_1)\widetilde{\zeta}_1^{\rho/(\beta-1)}}{j^{1+\delta\rho/(8(\beta-1))}},\ j\ge 1.
\end{eqnarray*}
Thus, 
$$\sum_{j=1}^{\infty} {\bf P}^{+}(\widetilde{\zeta}_j>j^{\beta+\delta/4-1})<+\infty$$ and by the Borel-Cantelli Lemma we have $\zeta_j = O(j^{\beta+\delta/4-1})$, $j\to\infty$, ${\bf P}^+$-a.s. This proves~(\ref{Ass2ForPsi}).
\end{proof}


\begin{thebibliography}{99}
\bibitem{VatKerst} Kersting, G., \& Vatutin, V. (2017). Discrete time branching processes in random environment. John Wiley \& Sons.
\bibitem{Jagers} Jagers, P. (1974). Galton-Watson processes in varying environments. Journal of Applied Probability, 11(1), 174-178.
\bibitem{Agresti} Agresti, A. (1975). On the extinction times of varying and random environment branching processes. Journal of Applied Probability, 12(1), 39-46.
\bibitem{Kersting} Kersting, G. (2020). A unifying approach to branching processes in a varying environment. Journal of Applied Probability, 57(1), 196-220.
\bibitem{Korshunov} Korshunov, I. D. (2025). Branching processes in random environment with freezing. Discrete Mathematics and Applications, 35(4), 235-247.
\bibitem{Pinelis} Pinelis, I. (2015). Best possible bounds of the von Bahr--Esseen type. Annals of Functional Analysis, 6(4), 1-29.
\bibitem{Orlicz} Musielak, J. (2006). Orlicz spaces and modular spaces. Springer.
\bibitem{Tanny} Tanny, D. (1988). A necessary and sufficient condition for a branching process in a random environment to grow like the product of its means. Stochastic processes and their applications, 28(1), 123-139.
\bibitem{AGKV} Afanasyev, V. I., Geiger, J., Kersting, G., \& Vatutin, V. A. (2005). Criticality for branching processes in random environment.
\end{thebibliography}
\end{document}